\newtheorem*{theorem*}{Theorem}
\theoremstyle{definition}
\theoremstyle{remark}
\numberwithin{equation}{section}
\newcommand{\calG}{\mathcal{G}}
\newcommand{\calA}{\mathcal{A}}
\newcommand{\calX}{\mathcal{X}}
\newcommand{\R}{\mathds{R}}
\newcommand{\ol}{\overline}
\title[Billiard balls]{Protecting billiard balls from collisions}
\author{Jayadev Athreya and Krzysztof Burdzy}
\address{Department of Mathematics, Box 354350, University of Washington, Seattle, WA 98195}
\email{jathreya@uw.edu}
\email{burdzy@uw.edu}
\thanks{JSA's research was supported in part by NSF CAREER grant DMS 1559860. KB's research was supported in part by Simons Foundation Grant 506732. }
\begin{document}

\begin{abstract}

We present a game inspired by research on the possible number of billiard ball collisions in the whole Euclidean space. One player tries to place $n$ static ``balls'' with zero radius (i.e., points) in a way that will minimize the total number of possible collisions caused by the cue ball. The other player tries to find initial conditions for the cue ball to maximize the number of collisions. The value of the game is $\sqrt{n}$ (up to constants).
The lower bound is based on the Erd\H{o}s-Szekeres Theorem. The upper bound may be considered a generalization of the Erd\H{o}s-Szekeres Theorem.

\end{abstract}

\maketitle

\section{Introduction}\label{intro}

This paper is inspired by articles on the maximum number of totally elastic collisions for a finite system of $n$ balls in a billiard table with no walls (i.e, the whole Euclidean space). We will review the history of this problem in Section \ref{review}. 
It is a challenge to find initial conditions so that the ensuing evolution involves a large number of collisions. The first lower bound, given in \cite{BD}, was of order $n^3$ in dimensions $d\geq2$. This was later improved in \cite{BuI18} to an exponential lower bound in dimensions $d\geq 3$. One particularly simple set of initial conditions is to make $n-1$ balls static (i.e., their initial velocities are zero) and send the remaining ball (``cue ball'')  in a direction that would trigger a large number of collisions. Needless to say, these initial conditions are inspired by real billiards games.

We will use the above idea as an inspiration for a game---a simplified and idealized version of the original problem. 
Consider two players. The first player has to place $n$ identical balls at some locations in the Euclidean space. These balls are static. The other player has one cue ball. The goal of the second player is to give the initial position and velocity to the cue ball that will maximize the number of collisions (between the cue ball and other balls, and between the other balls). The goal of the first player is to place the balls in a way that will minimize the maximum number of collisions, with the maximum taken over all initial conditions of the cue ball.

If a moving ball strikes a static ball then they will move in directions that form the right angle (assuming that the balls have the same masses and radii). This rigid property of elastic collisions suggests that the first player should place the balls at very large distances because then it will be hard for the second player to arrange for each of the two balls involved in a collision to move in a direction that will result in a new collision with some other ball. 

The above remarks lead us to the following idealized model. The static balls are assumed to be far from each other, in the sense that the ratio of the ball diameter to the typical distance between balls is very small. We will take this idea to the limit and we will assume that the balls' radii are zero. In other words, the cue ball, also assumed to be pointwise, will have to  hit a point, not a ball. Second, only one of the two balls (points) involved in a collision will be allowed to be involved in another collision, i.e., only one billiard trajectory may emanate from a point of the collision. The unique billiard trajectory leaving a collision point will have to form an angle greater than $\pi/2$ with the trajectory of the ball arriving at the collision location. Finally, moving balls will be able to collide only with static balls (points) because it is ``unlikely'' that two moving balls can collide. See Fig.~\ref{fig1}.

\begin{figure} \includegraphics[width=0.5\linewidth]{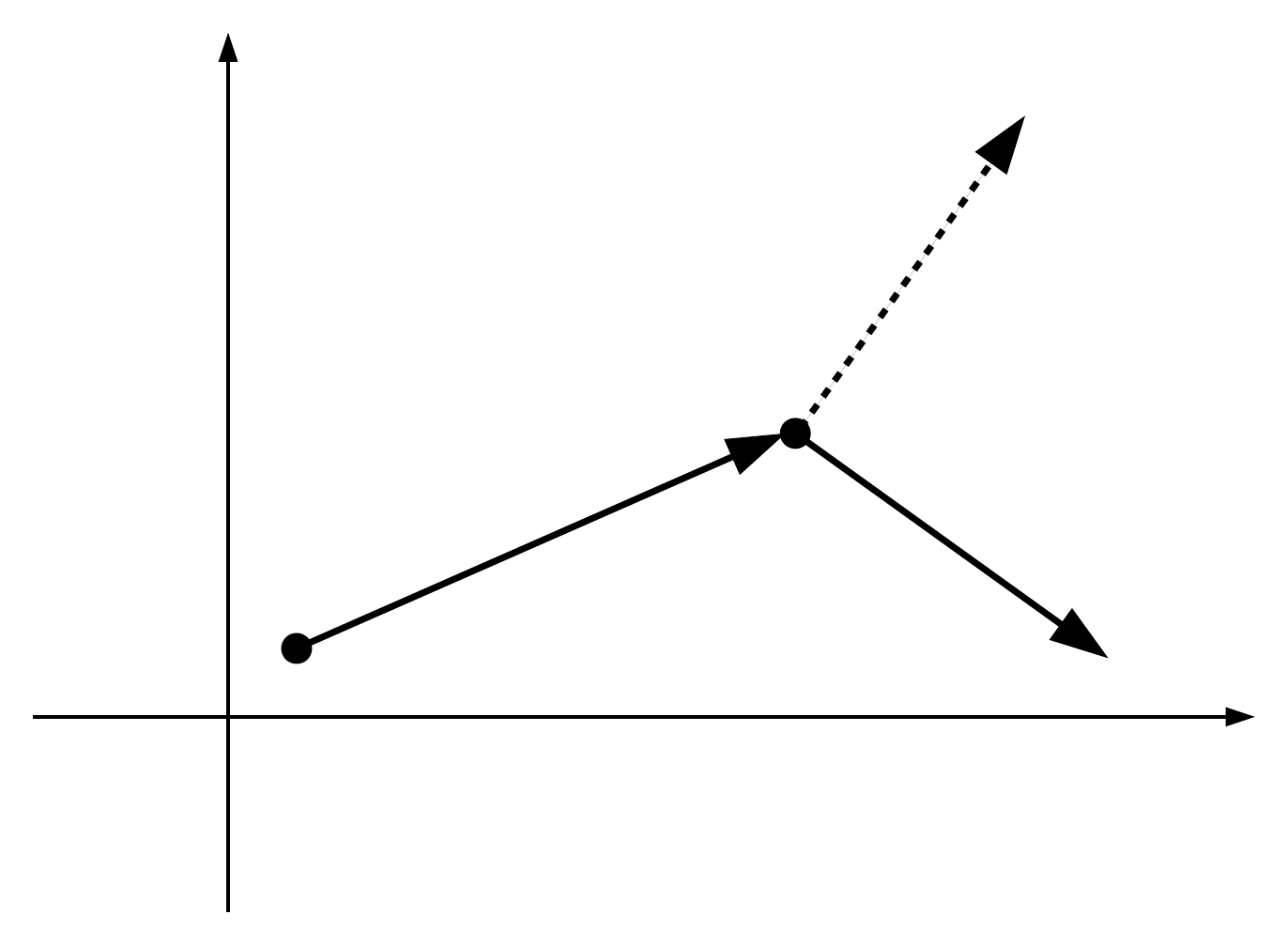}
\caption{ Only one billiard trajectory may emanate from a point of the collision. The dashed line represents the trajectory of the ball that will not be involved in any future collisions.
}
\label{fig1}
\end{figure}

Informally speaking, our main result says that, in two dimensions, the value of the game is $c\sqrt{n}$ (in the rigorous statement, the bounds for both players have unequal constants in front of $\sqrt{n}$). The first player may place the balls (points) in such a way that the second player can create at most $3(\sqrt{n}+1)$ collisions, and no matter where the balls are placed by the first player, the second one can generate at least $\lfloor\sqrt{n-1} +1\rfloor$ collisions.

A rigorous mathematical representation of the game and our claims is given in Section \ref{model}. We  analyze only the two dimensional case.

Our proof of the lower bound $\lfloor\sqrt{n-1} +1\rfloor$ is based on the Erd\H{o}s-Szekeres Theorem. The upper bound $3(\sqrt{n}+1)$ may be considered a generalization of (one direction of) the Erd\H{o}s-Szekeres Theorem.
The Erd\H{o}s-Szekeres Theorem was proved in \cite{ESz}. For short proofs, see \cite{Black,Seid}.

There are  many possible versions and generalizations of the game. One of the most obvious modifications is to allow the angle between consecutive portions of the trajectory to be in the interval $[\alpha, \pi]$ for some $\alpha \ne \pi/2$.
We do not know any theorems about this version of the game, except for the obvious monotonicity; for example, if $\alpha < \pi/2$ then the second player can generate at least as many collisions as in the original game (when the lower bound for the angle is equal to $\pi/2$).

\subsection{Hard ball collisions---historical review}
\label{review}

The question of whether a finite system of hard balls can have an infinite number of elastic collisions was posed by Ya.~Sinai. It was answered in the negative by \cite{Vaser79}. For alternative proofs see \cite{Illner89, Illner90,IllnerChen}. 
It was proved in \cite{BFK1} that a system of $n$ balls in the Euclidean space undergoing elastic collisions can experience at most
\begin{align}\label{s26.1}
\left( 32 \sqrt{\frac{m_{\text{max}}}{m_{\text{min}}} } 
\frac{r_{\text{max}}}{r_{\text{min}}} n^{3/2}\right)^{n^2}
\end{align}
collisions. Here $m_{\text{max}}$ and $m_{\text{min}}$ denote the maximum and the minimum masses of the balls. Likewise, $r_{\text{max}}$ and $r_{\text{min}}$ denote the maximum and the minimum radii of the balls.
The following alternative upper bound for the maximum number of collisions appeared in \cite{BFK5}
\begin{align}\label{s26.2}
\left( 400 \frac{m_{\text{max}}}{m_{\text{min}}} 
 n^2\right)^{2n^4}.
\end{align}
  The papers \cite{BFK1,BFK2, BFK3,BFK4, BFK5} were the first to present universal bounds \eqref{s26.1}-\eqref{s26.2} on the number of collisions of $n$ hard balls in any dimension. No improved universal bounds were found since then, as far as we know.

It has been proved in \cite{BD} by example that the  number of elastic collisions of $n$ balls
in $d$-dimensional space is greater than $n^3/27$ for $n\geq 3$ and $d\geq 2$, for some initial conditions. The previously known lower bound was of order $n^2$ (that bound was for balls in dimension 1 and was totally elementary). An exponential lower bound was given in \cite{BuI18} in dimensions $d\geq 3$.

A related article, \cite{ABD}, gives an upper bound for the number of ``collisions'' of pinned billiard balls.

\section{Rigorous model}
\label{model}

Whenever we refer to the angle between two line segments sharing an endpoint, we mean the smaller of the two angles, i.e., the one in the range $[0,\pi]$.

Let $\calA(n)$ be the family of all sets of $n$ distinct points in $\R^2$.
Given a set $\calX:=\{x_1, x_2, \dots, x_n\}\in \calA(n)$, we define an \emph{admissible billiard trajectory} $\Gamma$ as a polygonal line with vertices $y_1, y_2, \dots, y_k$, such that $k\leq n$, all   $y_1, y_2, \dots, y_k$ are distinct, each $y_j$ is equal to some $x_m$, and the  angle between the line segments $\ol {y_{j-1}, y_j}$ and  $\ol {y_{j}, y_{j+1}}$ is in the range $(\pi/2, \pi]$ for all $2\leq j \leq k-1$. We will call $k$ the length of $\Gamma$ and we will write $|\Gamma| =k$. The set of all admissible billiard trajectories relative to $\calX\in \calA(n)$ will be denoted $\calG(\calX)$. See Fig.~\ref{fig2}.

\begin{figure} \includegraphics[width=0.5\linewidth]{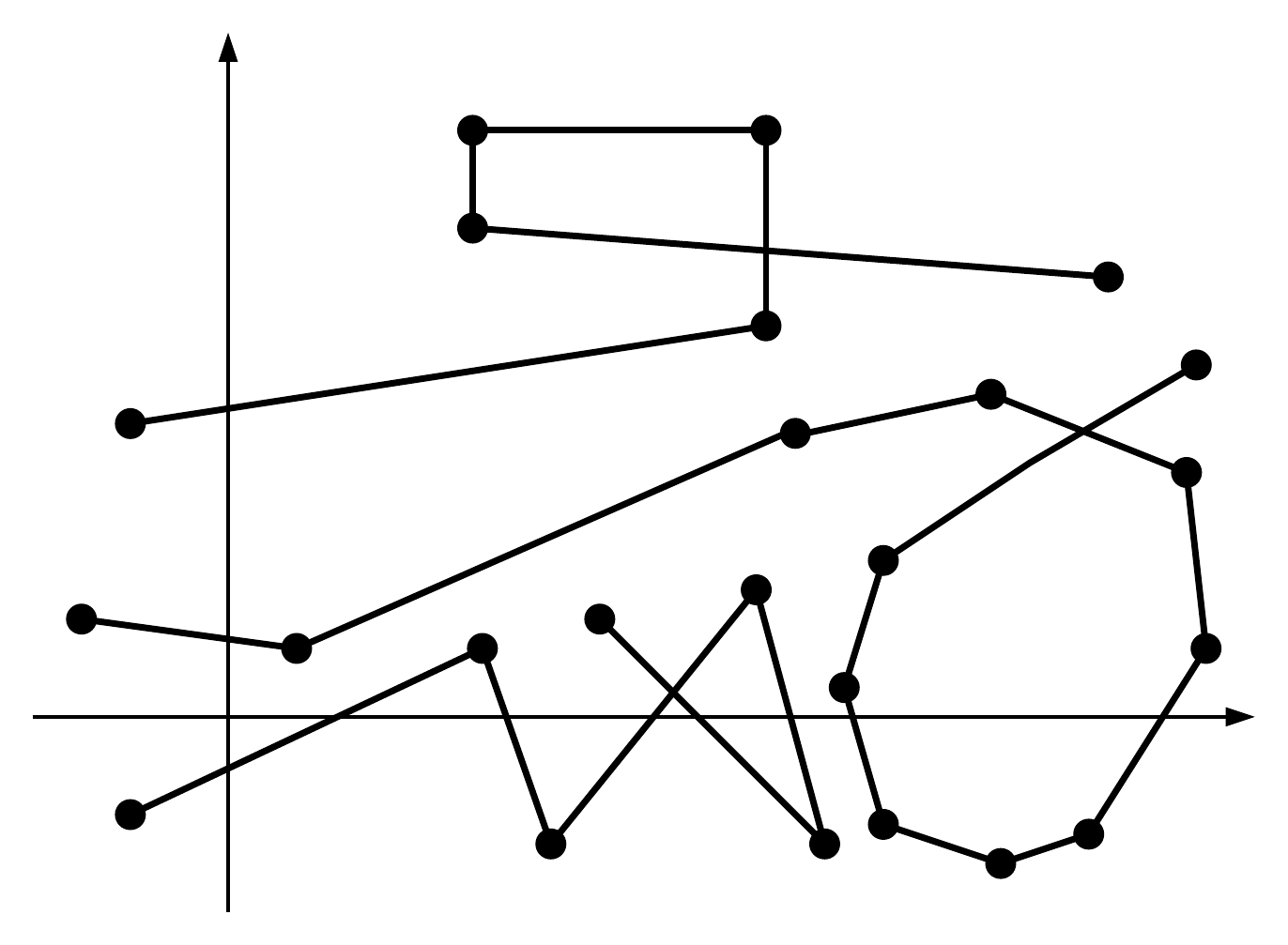}
\caption{The top trajectory is admissible. The lower trajectory  is not admissible.
}
\label{fig2}
\end{figure}

\begin{theorem*}
The following holds for every $n\geq 1$.

(i) For every $\calX\in \calA(n)$ there exists $\Gamma \in \calG(\calX)$ with $|\Gamma| \geq \lfloor\sqrt{n-1} +1\rfloor$.

(ii) There exists $\calX\in \calA(n)$ such that $|\Gamma| \leq  3 \Big\lceil \sqrt{n} \Big\rceil$ for every $\Gamma \in \calG(\calX)$.

\end{theorem*}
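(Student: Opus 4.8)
For part (i) the plan is to use the Erdős–Szekeres theorem almost verbatim, after reducing to a monotone configuration. First rotate the plane by a generic angle so that the $n$ points of $\calX$ have pairwise distinct $x$‑coordinates and pairwise distinct $y$‑coordinates; only finitely many angles are excluded, and a rotation preserves all angles, hence preserves $\calG(\calX)$. Relabel the points $p_1,\dots,p_n$ in increasing order of $x$‑coordinate and let $b_i$ be the $y$‑coordinate of $p_i$. Erdős–Szekeres applied to the sequence $b_1,\dots,b_n$ of distinct reals produces a monotone subsequence of length at least $\lceil\sqrt n\rceil=\lfloor\sqrt{n-1}+1\rfloor$; say it is indexed by $i_1<\dots<i_\ell$, and let $\Gamma$ be the polygonal line through $q_1:=p_{i_1},\dots,q_\ell:=p_{i_\ell}$. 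Along $\Gamma$ the $x$‑coordinates strictly increase and the $y$‑coordinates are strictly monotone, so for $2\le j\le\ell-1$, writing $q_j-q_{j-1}=(u,v)$ and $q_{j+1}-q_j=(u',v')$ with $u,u'>0$ and $vv'>0$, we get $(q_{j-1}-q_j)\cdot(q_{j+1}-q_j)=-(uu'+vv')<0$, so the angle at $q_j$ lies in $(\pi/2,\pi]$. Hence $\Gamma\in\calG(\calX)$ and $|\Gamma|=\ell\ge\lfloor\sqrt{n-1}+1\rfloor$; the cases $n=1,2$ are trivial. The only delicate point is the rotation, and it is harmless.

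Part (ii) is the substantial half and is where a genuine "generalization of Erdős–Szekeres" is needed. It is useful to record the equivalent form of admissibility: $y_1,\dots,y_k$ is admissible iff the consecutive displacement vectors $v_t:=y_{t+1}-y_t$ satisfy $v_{t-1}\cdot v_t>0$ for all interior $t$, i.e.\ the change of direction at each interior vertex has magnitude strictly less than $\pi/2$. My plan is to build an explicit $\calX$ out of $m:=\lceil\sqrt n\rceil$ tiny, very widely separated "blocks" $B_1,\dots,B_m$, with $B_i$ containing about $m$ points (so $\sum_i|B_i|\ge n$; discard the surplus), arranged so that two things hold at once: (a) the \emph{between‑block} geometry is rigid — the block centres lie very nearly on a line, so the direction of any segment joining a point of one block to a point of another is confined to a tiny neighbourhood of one of two opposite directions; and (b) the \emph{within‑block} geometry is essentially one‑dimensional — the points of $B_i$ lie on a very short segment whose slope is chosen as a function of $i$ so that motion inside $B_i$ is forced to be monotone along that segment.

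Granting such a configuration, the estimate would be proved in two steps. First, at the block level: two consecutive between‑block segments have directions that are nearly equal or nearly opposite, and a turn of magnitude $<\pi/2$ rules out the opposite case, so an admissible trajectory visits the blocks it uses in a monotone order with no "turning around"; in particular it meets at most $m$ distinct blocks. Moreover the transitions between between‑block and within‑block motion are close to right angles, and the construction must be tuned so these land on the forbidden side; the effect is that a block can be "passed through", contributing one point, or "traversed once", contributing up to $|B_i|\le m$ points, but these cannot be chained across many blocks. Second, within a block: since the points of $B_i$ are nearly collinear, a maximal sub‑trajectory inside $B_i$ is nearly monotone along the block segment, hence has length at most $|B_i|\le m$. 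Combining the two steps bounds $|\Gamma|$ by a small multiple of $m$; one expects roughly $2m$, and the constant $3$ in $3\lceil\sqrt n\rceil$ leaves enough slack to absorb the case analysis.

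The main obstacle is the first step of part (ii): forcing the "borderline" turns to behave. The transitions described above — from a between‑block segment into a within‑block segment, and from a "backward" pass of one block into an adjacent block — are all close to $\pi/2$, and whether they are admissible depends delicately on the precise placement of the block centres (probably on a gently convex curve rather than a straight line) and on the precise slopes of the within‑block segments. If these are mistuned one can chain backward traverses of consecutive blocks and manufacture an admissible trajectory of length $\asymp m^2\asymp n$; getting this tuning right is the crux, and once it is in place the counting is routine. It is worth noting that the obvious candidates fail for exactly this reason: the full $m\times m$ grid, and points on a circle or on a convex arc, all admit admissible "spirals" of length $\asymp n$, which is precisely why a carefully tuned, essentially one‑dimensional construction is required.
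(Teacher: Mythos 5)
Your part (i) is correct and is essentially the paper's argument: rotate so that the two coordinate projections are injective, order by first coordinate, apply the Erd\H{o}s--Szekeres Theorem to the second coordinates, and check via the dot product that a monotone staircase turns by less than $\pi/2$ at each vertex. The arithmetic $\lceil\sqrt{n}\rceil=\lfloor\sqrt{n-1}+1\rfloor$ is also right. Nothing to add there.

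Part (ii), however, is not a proof: you describe a family of candidate configurations (nearly collinear blocks with centres nearly on a line) and then explicitly identify, without resolving, the obstruction that decides whether any such configuration works --- namely that the block-to-block and block-entry/exit turns are all borderline $\pi/2$ turns, and a mistuning lets an admissible trajectory chain backward traverses through $\asymp m$ blocks and pick up $\asymp m^2\asymp n$ vertices. That tuning \emph{is} the theorem; everything you call ``routine counting'' is downstream of it, and you correctly observe that the natural candidates (grids, convex arcs) fail at exactly this point. As it stands the upper bound is unproved. For comparison, the paper sidesteps the collinear-blocks picture entirely and uses a radial construction: $m=\lceil\sqrt{n}\rceil$ concentric regular $m$-gons $K(a^j)$, $j=0,\dots,m-1$, with a ratio $a$ so small that whenever a segment arrives at a ring from a ring smaller by a factor of at least $a$, the next segment must go to a strictly larger ring (property (*) in the paper). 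This single quantitative fact forces every admissible trajectory to have the shape ``strictly inward run, one stay on the innermost ring visited (at most $m$ vertices), strictly outward run,'' each monotone run having at most $m-1$ vertices since there are only $m$ rings; hence $|\Gamma|<3m$. If you want to salvage your line-of-blocks plan you would need an analogue of (*) --- a one-step geometric condition whose iteration forces global monotonicity of the block index along the trajectory --- and you have not produced one; the geometric decay of the radii is precisely what makes (*) checkable in the paper's construction, and there is no obvious substitute when the blocks are translates along a line.
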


\begin{proof}

(i) Let the coordinates of $x_k$'s be denoted $x_k=(x_k^1, x_k^2)$. We can assume without loss of generality that the set $\calX$ is oriented so that all coordinates $x_1^1, x_2^1, \dots , x_n^1$ are distinct, and the same is true for $x_1^2, x_2^2, \dots , x_n^2$.

Let $m =\lfloor\sqrt{n-1} +1\rfloor$.
By the Erd\H{o}s-Szekeres Theorem, there exists a sequence $k_1 < k_2< \dots< k_m$  such that the sequence $x^2_{k_1}, x^2_{k_2}, \dots , x^2_{k_m}$ is monotone. This implies that 
 the  angle between the line segments $\ol {x_{k_{j-1}}, x_{k_j}}$ and  $\ol {x_{k_{j}}, x_{k_{j+1}}}$ is in the range $(\pi/2, \pi]$ for all $2\leq j \leq m-1$.

\bigskip

(ii) 
Let $m = \Big\lceil \sqrt{n} \Big\rceil$. For $r>0$, let $K(r) = K(m,r)$ be the set of vertices of a regular $m$-gon inscribed in a circle of radius $r$. That is, $K(r)$ consists of the points (in complex notation) $r e ^{(2k\pi/m)i}$, for $k=0,1,\dots, m-1$.  See Fig.~\ref{fig5}.

\begin{figure} \includegraphics[width=0.3\linewidth]{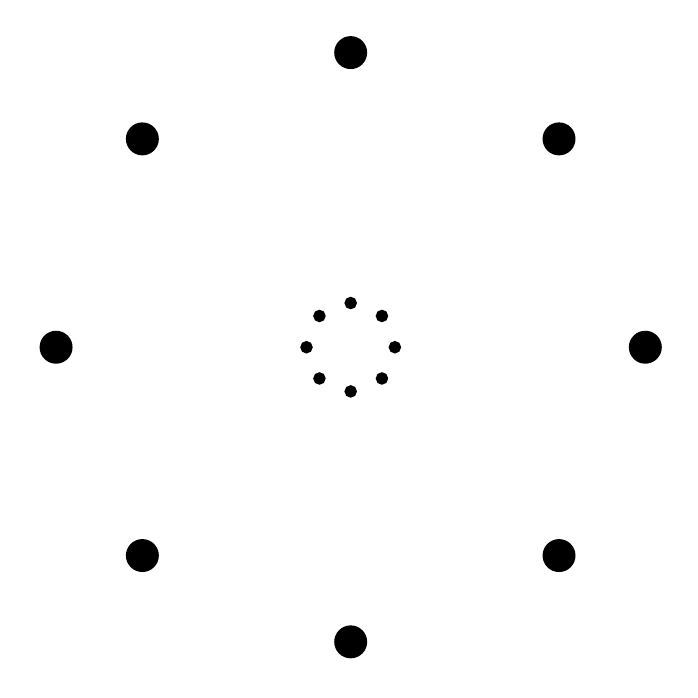}
\caption{Sets $K(8,r_1)$ and $K(8,r_2)$.
}
\label{fig5}
\end{figure}

We can find a scaling factor $a=a(m)\in(0,1)$ sufficiently small so that if $x \in K(ar) $ and $y,z\in K(r)$ then the angle between line segments $\ol{xy}$ and $\ol{yz}$ is strictly less than $\pi/2$. For such an $a$, a stronger property holds, namely,  

(*)
 If $b\in (0,1]$, $c\in(0,a]$, $x \in K(cr) $, $y\in K(r)$ and $z\in K(br)$ then the angle between line segments $\ol{xy}$ and $\ol{yz}$ is strictly less than $\pi/2$.

Let $\calX = \bigcup_{j=0}^{m-1} K(a^j )$. 
We have $|\calX|=m^2=\Big\lceil \sqrt{n} \Big\rceil^2\geq n$.
The validity of the argument given below is not affected by the possibility that $\calX$ may have more than $n$ elements. 

Suppose that $\Gamma\in \calG(\calX)$.
Let $k$ be the largest number with the property that 
some of the vertices of $\Gamma$ belong to $K(a^k)$.
The number of vertices of $\Gamma$ in $K(a^k)$ is at most $m$, i.e., the cardinality of $K(a^k)$. 

 Let the line segments in $\Gamma$ be denoted 
$\ol{y_j, y_{j+1}}$ for $j=1,\dots, \ell-1$, where $\ell = |\Gamma|$.
Let $I:=(p,p+1, \dots, r)$ be 
a maximal interval of consecutive integers such that $y_i \in K(a^k)$ for all $i\in I$.
Here ``maximal'' means that either $p= 1$ or $y_{p-1} \notin K(a^k)$, and, similarly, either $r= \ell$ or $y_{r+1} \notin K(a^k)$. 
We have $r-p +1\leq m$ because  the number of vertices of $\Gamma$ in $K(a^k)$ is at most $m$.

Suppose that $r < \ell$. Then $y_{r+1} \in K(a^{k_1})$ for some $k_1 < k$. 
It follows from (*) that
if $r+1 < \ell$ 
then $y_{r+2} \in K(a^{k_2})$ for some $k_2 < k_1$.
By induction, if $r+j < \ell$ 
then $y_{r+j+1} \in K(a^{k_{j+1}})$ for some $k_{j+1} < k_j$.
Since $k\leq m-1$ and $k>k_1 > \dots > k_j \geq 0$ for all $j$, it follows that $\ell - r \leq m-1$.

A completely analogous argument shows that $p \leq m-1$. Hence, 
\begin{align*}
|\Gamma|=\ell = (p-1) + (r-p+1) + (\ell -r) \leq (m-1) + m + (m-1) < 3 m = 3 \Big\lceil \sqrt{n} \Big\rceil.
\end{align*}

\end{proof}

\medskip

\noindent\textbf{Remark.} The space $\calG(\calX)$ is an interesting configuration space. It seems interesting to try and understand the topology of $\calG(\calX)$. The topology of $\calA(n)$ is a well-studied subject, as it is the \emph{classifying space} of the braid group $B_n$, see~\cite[pp.183-186]{Arn}.

\bibliographystyle{plain}
\bibliography{hard}

\end{document}